\newcommand{\subj}[1]{\par\noindent{\bf AMS Subject Classifications: }#1.}
\newcommand{\keyw}[1]{\par\noindent{\bf Keywords: }#1.}
\numberwithin{equation}{section}
\numberwithin{figure}{section}
\newtheorem{theorem}{Theorem}[section]
\newtheorem{lemma}[theorem]{Lemma}
\newtheorem{proposition}[theorem]{Proposition}
\theoremstyle{definition}
\newtheorem{definition}[theorem]{Definition}
\theoremstyle{remark}
\date{}
\newcommand{\adsa}
{\vspace{-1in}\normalsize\flushleft
This is a preprint of a paper whose final and definite form will be published in:\\ 
Adv. Dyn. Syst. Appl. ({\tt http://campus.mst.edu/adsa}).\\ 
Submitted July 31, 2012; revised and accepted August 11, 2012.\\\vspace{1mm}\hrule\vspace{5mm}}
\def\di{\displaystyle}
\newcommand{\N}{\mathbb{N}}
\newcommand{\R}{\mathbb{R}}
\newcommand{\LL}{\mathcal{L}}
\newcommand{\CC}{\mathscr{C}}
\newcommand{\PP}{\mathscr{P}}
\newcommand{\W}{\textrm W}
\renewcommand{\L}{\textrm L}
\newcommand{\E}{\textrm E}
\newcommand{\Il}{I^{\alpha}_{-}}
\newcommand{\Ir}{I^{\alpha}_{+}}
\newcommand{\IlC}{I^{1-\alpha}_{-}}
\newcommand{\IrC}{I^{1-\alpha}_{+}}
\newcommand{\Dl}{{_{c}}D^{\alpha}_{-}}
\newcommand{\Dr}{{_{c}}D^{\alpha}_{+}}
\newcommand*{\hooktwoheadrightarrow}{\lhook\joinrel\twoheadrightarrow}
\newcommand{\fonction}[5]{\begin{array}[t]{lrcl}#1 :&#2 &\longrightarrow &#3\\&#4& \longmapsto &#5 \end{array}}
\newcommand{\fonctionsansdef}[3]{\begin{array}[t]{lrcl}#1 :&#2 &\longrightarrow &#3 \end{array}}
\begin{document}

\title{\adsa \center\Large\bf Existence of Minimizers
for Fractional Variational Problems Containing Caputo Derivatives}

\author{{\bf Lo\"ic Bourdin}$^{1}$\\
{\tt bourdin.l@etud.univ-pau.fr}
\and
{\bf Tatiana Odzijewicz}$^{2}$\\
{\tt tatianao@ua.pt}
\and
{\bf Delfim F. M. Torres}$^{2}$\\
{\tt delfim@ua.pt}}


\date{$^1$Laboratoire de Math\'{e}matiques et de leurs Applications\\
Universit\'{e} de Pau et des Pays de l'Adour, Pau, France\\[0.3cm]
$^2$Center for Research and Development in Mathematics and Applications\\
Department of Mathematics, University of Aveiro, Aveiro, Portugal}

\maketitle


\thispagestyle{empty}


\begin{abstract}
We study dynamic minimization problems of the calculus of variations
with Lagrangian functionals containing Riemann--Liouville fractional integrals,
classical and Caputo fractional derivatives.
Under assumptions of regularity, coercivity and convexity,
we prove existence of solutions.
\end{abstract}


\subj{26A33; 49J05}

\keyw{fractional calculus; calculus of variations; existence of minimizers}


\section{Introduction}

For the origin of the calculus of variations with fractional operators
we should look back to 1996-97, when Riewe used non-integer order derivatives
to better describe nonconservative systems in mechanics \cite{MR1401316,MR1438729}.
Since then, numerous works on the fractional variational calculus have been written.
In particular, we can find a comprehensive literature regarding
necessary optimality conditions and Noether's theorem (see, e.g.,
\cite{MR2736825,MR2905870,MR2314503,MR2338631,LoicNoether,MR2550821,MR2454424}).
For the state of the art on the fractional calculus of variations
and respective fractional Euler--Lagrange equations, we refer the reader
to the recent book \cite{MyID:208}. Here we remark that results
addressed to the existence of solutions for problems
of the fractional calculus of variations are rare, being, to the best of our knowledge,
discussed only in \cite{LoicExistence,KlimekExistence}. However, existence
theorems are essential ingredients of the deductive method for solving variational problems,
which starts with the proof of existence, proceeds with application of optimality conditions,
and finishes examining the candidates to arrive to a solution.
These arguments make the question of existence an emergent topic,
which requires serious attention and more interest \cite{MyID:227}.

In this note we discuss the problem of existence of solutions
for fractional variational problems. We consider functionals with Lagrangians
depending on the Riemann--Liouville fractional integral and classical
and Caputo fractional derivatives.
Necessary optimality conditions for such problems
were recently obtained in \cite{MR2861352}.
Here, inspired by the results given in \cite{LoicExistence},
we prove existence of solutions in an appropriate space of functions
and under suitable assumptions of regularity, coercivity and convexity.
For the classical methods of existence of minimizers for variational
functionals we refer the reader to \cite{MR0688142,MR1085948,MR2361288}.

The article is organized as follows. In Section~\ref{sec:basicnotions}
we provide the basic definitions and properties for the fractional operators
used throughout the text. Main notations are fixed. Our results are then formulated
and proved in Section~\ref{sec:results}: in Section~\ref{ssec:tonelli}
we prove existence of minimizers for fractional problems of the calculus
of variations with a Lagrangian containing Caputo derivatives;
Sections~\ref{ssec:regular} and~\ref{ssec:coercive} are devoted
to sufficient conditions implying regularity and coercivity, respectively.
Finally, an example is given in Section~\ref{ex:example}.


\section{Preliminaries}
\label{sec:basicnotions}

We recall here the necessary definitions
and present some properties of the fractional operators under consideration.
Moreover, we fix our notations for later discussions.
The reader interested on fractional analysis is refereed to
the books \cite{MR2218073,MR1658022,MR1347689}.

Let $a,b$ be two real numbers such that $a<b$, let $d\in\N^*$
be the dimension, where $\N^*$ denotes the set of positive integers,
and let $\left\|\cdot\right\|$ denote
the standard Euclidean norm of $\R^d$.
For any $1 \leq r \leq \infty$, we denote
\begin{itemize}
\item by $\L^r := \L^r (a,b;\R^d)$ the usual space of $r$-Lebesgue
integrable functions endowed with its usual norm $\Vert \cdot \Vert_{\L^r}$;
\item by $\W^{1,r} := \W^{1,r} (a,b;\R^d)$ the usual $r$-Sobolev
space endowed with its usual norm $\Vert \cdot \Vert_{\W^{1,r}}$.
\end{itemize}
Furthermore,  $\CC := \CC ([a,b];\R^d)$ will be understood as the standard space
of continuous functions and $\CC^\infty_c := \CC^\infty_c ([a,b];\R^d)$
as the standard space of infinitely differentiable functions compactly supported in $(a,b)$.
Finally, let us remind that the compact embedding
$\W^{1,r} \hooktwoheadrightarrow \CC$ holds for $1 < r \le +\infty$
(see \cite{MR2759829} for a detailed proof).

We define the left and the right Riemann--Liouville fractional integrals
$\Il$ and $\Ir$ of order $\alpha\in\R$, $\alpha >0$, by
\begin{equation*}
\Il[f](t):=\frac{1}{\Gamma(\alpha)}
\int_a^t \frac{f(y)}{(t-y)^{1-\alpha}} dy , \quad t>a
\end{equation*}
and
\begin{equation*}
\Ir[f](t):=\frac{1}{\Gamma(\alpha)}
\int_t^b \frac{f(y)}{(y-t)^{1-\alpha}} dy, \quad t<b,
\end{equation*}
respectively. Here $\Gamma$ denotes the Euler Gamma function.
Note that operators $\Il$ and $\Ir$ are well defined a.e. on $(a,b)$
for any $f\in \L^1$.

Let $0< \alpha <1$ and $\dot{f}$ denote the usual derivative of $f$.
Then the left and the right Caputo fractional derivatives $\Dl$ and $\Dr$
of order $\alpha$ are given by
\begin{equation*}
\Dl[f](t):=\IlC [\dot{f}](t)
~~\textnormal{ and }~~\Dr[f](t):=-\IrC [\dot{f}](t)
\end{equation*}
for all $t\in (a,b]$  and $t \in [a,b)$, respectively.
Note that the Caputo derivatives of a function $f \in W^{1,1}$
are well defined almost everywhere on $(a,b)$.

We make use of the following well-known property
yielding boundedness of Riemann--Liouville fractional integrals in the space $\L^r$.

\begin{proposition}[see, e.g., \cite{MR2218073,MR1347689}]
The left Riemann--Liouville fractional integral
$\Il$ with $\alpha >0$ is a linear and bounded operator in $\L^r$:
\begin{equation*}
\left\|\Il[f]\right\|_{\L^r}
\leq \frac{(b-a)^{\alpha}}{\Gamma(1+\alpha)}\left\|f\right\|_{\L^r}
\end{equation*}
for all $f\in\L^r$, $1 \leq r \leq +\infty$.
\end{proposition}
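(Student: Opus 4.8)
The plan is to treat linearity and boundedness separately. Linearity of $\Il$ is immediate from the linearity of the Lebesgue integral, so the entire content of the statement lies in the norm estimate. The key observation is that $\Il$ is a convolution operator: after the change of variables $s = t-y$ one rewrites
\begin{equation*}
\Il[f](t) = \frac{1}{\Gamma(\alpha)} \int_0^{t-a} \frac{f(t-s)}{s^{1-\alpha}}\, ds,
\end{equation*}
which exhibits $\Il[f]$ as the convolution of (the zero-extension of) $f$ with the kernel $k(s) = s^{\alpha-1}/\Gamma(\alpha)$ supported on $(0, b-a)$. All subsequent estimates rest on this representation.

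For $1 \le r < \infty$ I would extend $f$ by zero outside $[a,b]$ and apply the generalized Minkowski (integral) inequality to move the $\L^r$-norm inside the $s$-integral:
\begin{equation*}
\left\| \Il[f] \right\|_{\L^r} \le \frac{1}{\Gamma(\alpha)} \int_0^{b-a} \frac{1}{s^{1-\alpha}} \left\| f(\cdot - s) \right\|_{\L^r(a,b)}\, ds.
\end{equation*}
By translation invariance of the $\L^r$-norm together with the zero extension one has $\| f(\cdot - s) \|_{\L^r(a,b)} \le \| f \|_{\L^r}$ for every $s \ge 0$, so the norm factors out of the integral. The remaining integral is elementary, $\int_0^{b-a} s^{\alpha-1}\, ds = (b-a)^\alpha/\alpha$, and the functional equation $\Gamma(1+\alpha) = \alpha \Gamma(\alpha)$ converts the constant into exactly $(b-a)^\alpha/\Gamma(1+\alpha)$, as claimed.

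The endpoint case $r = +\infty$ I would dispatch directly by a pointwise bound: for a.e.\ $t \in (a,b)$,
\begin{equation*}
\left| \Il[f](t) \right| \le \frac{\| f \|_{\L^\infty}}{\Gamma(\alpha)} \int_a^t \frac{dy}{(t-y)^{1-\alpha}} = \frac{(t-a)^\alpha}{\Gamma(1+\alpha)} \| f \|_{\L^\infty} \le \frac{(b-a)^\alpha}{\Gamma(1+\alpha)} \| f \|_{\L^\infty}.
\end{equation*}
Equivalently, the whole range $1 \le r \le +\infty$ can be covered simultaneously by Young's convolution inequality $\| k * f \|_{\L^r} \le \| k \|_{\L^1} \| f \|_{\L^r}$ applied to the kernel above, since $\| k \|_{\L^1} = (b-a)^\alpha/\Gamma(1+\alpha)$. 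I do not anticipate a genuine obstacle here, this being a standard computation; the only points requiring care are the correct bookkeeping of the domain of integration — the kernel is integrated only up to $b-a$ rather than to $+\infty$ — and the justification for exchanging norm and integral, which is precisely what the generalized Minkowski inequality (or Young's inequality) supplies.
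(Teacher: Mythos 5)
Your proof is correct. The paper itself gives no proof of this proposition --- it is stated with a citation to the standard references \cite{MR2218073,MR1347689} --- and the argument you give (rewrite $\Il$ as convolution with the kernel $s^{\alpha-1}/\Gamma(\alpha)$ supported on $(0,b-a)$, then apply the generalized Minkowski or Young inequality, with the elementary computation $\int_0^{b-a}s^{\alpha-1}\,ds=(b-a)^{\alpha}/\alpha$ and $\alpha\Gamma(\alpha)=\Gamma(1+\alpha)$ producing the stated constant) is precisely the standard proof found in those references. The only cosmetic remark is that $f$ is $\R^d$-valued here, so the pointwise bound in the $r=+\infty$ case should read $\Vert \Il[f](t)\Vert$ rather than $\vert \Il[f](t)\vert$; the Minkowski integral inequality applies verbatim to vector-valued functions, so nothing of substance changes.
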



\section{Main Results}
\label{sec:results}

Along the work $1 < p < \infty$. Let $p'$ denote the adjoint of $p$
and let $\alpha\in\R$, $0< \alpha <1$.
We consider the variational functional
\begin{equation*}
\fonction{\LL}{\E}{\R}{u}{\di \int_a^b L(u,\Il[u],\dot{u},\Dl[u],t) \; dt}
\end{equation*}
and our main goal is to prove existence of minimizers for $\LL$. We assume that $\E$
is a weakly closed subset of $\W^{1,p}$, $\dot{u}$ is the derivative
of $u$ and $\L$ is a Lagrangian of class $\CC^1$:
\begin{equation*}
\fonction{L}{(\R^d)^4 \times [a,b]}{\R}{(x_1,x_2,x_3,x_4,t)}{L(x_1,x_2,x_3,x_4,t).}
\end{equation*}
By $\partial_i L$ we denote the partial derivatives
of $L$ with respect to its $i$th argument.


\subsection{A Tonelli-type Theorem}
\label{ssec:tonelli}

Using general assumptions of regularity, coercivity and convexity,
we prove a fractional analog of the classical Tonelli theorem,
ensuring the existence of a minimizer for $\LL$.

\begin{definition}
We say that $L$ is \emph{regular} if
\begin{itemize}
\item $L(u,\Il[u],\dot{u},\Dl[u],t) \in \L^1$;
\item $\partial_1 L(u,\Il[u],\dot{u},\Dl[u],t) \in \L^1$;
\item $\partial_2 L(u,\Il[u],\dot{u},\Dl[u],t) \in \L^{p'}$;
\item $\partial_3 L(u,\Il[u],\dot{u},\Dl[u],t) \in \L^{p'}$;
\item $\partial_4 L(u,\Il[u],\dot{u},\Dl[u],t) \in \L^{p'}$;
\end{itemize}
for any $u\in\W^{1,p}$.
\end{definition}

\begin{definition}
We say that $\LL$ is \emph{coercive} on $E$ if
\begin{equation*}
\lim\limits_{\substack{\Vert u \Vert_{\W^{1,p}}
\to \infty \\ u \in \E }} \LL (u) = +\infty.
\end{equation*}
\end{definition}

Next result gives a Tonelli-type theorem for Lagrangian functionals
containing fractional derivatives in the sense of Caputo.

\begin{theorem}[Tonelli's existence theorem for fractional variational problems]
\label{thmtonelli}
If
\begin{itemize}
\item $L$ is regular;
\item $\LL$ is coercive on $E$;
\item $L(\cdot,t)$ is convex on $(\R^d)^4$ for any $t \in [a,b]$;
\end{itemize}
then there exists a minimizer for $\LL$.
\end{theorem}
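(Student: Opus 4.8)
The plan is to run the direct method of the calculus of variations, with the convexity hypothesis supplying weak lower semicontinuity. First I would pick a minimizing sequence $(u_n) \subset \E$, so that $\LL(u_n) \to m := \inf_{u\in\E}\LL(u)$; note $m<+\infty$ since regularity makes $\LL$ finite on $\W^{1,p}$. By coercivity of $\LL$ on $\E$, the sequence $(u_n)$ is bounded in $\W^{1,p}$, for otherwise a subsequence would have norm tending to infinity and force $\LL(u_n)\to+\infty$, contradicting minimality. Since $1<p<\infty$, the space $\W^{1,p}$ is reflexive, so up to a (non-relabelled) subsequence there is $u^*\in\W^{1,p}$ with $u_n \rightharpoonup u^*$ weakly in $\W^{1,p}$; because $\E$ is weakly closed, $u^*\in\E$. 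It then remains to establish that $\LL$ is weakly lower semicontinuous, namely $\LL(u^*)\le\liminf_n\LL(u_n)=m$, which together with $\LL(u^*)\ge m$ forces $u^*$ to be a minimizer.

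Next I would pin down the mode of convergence of each of the four arguments of $L$. The compact embedding $\W^{1,p}\hooktwoheadrightarrow\CC$ (valid for $1<p<\infty$) turns the weak convergence $u_n\rightharpoonup u^*$ into strong uniform convergence $u_n\to u^*$ in $\CC$, hence in $\L^\infty$ and in every $\L^r$. By the $\L^p$-boundedness of $\Il$ (Section~\ref{sec:basicnotions}), strong $\L^p$ convergence is preserved, so $\Il[u_n]\to\Il[u^*]$ strongly in $\L^p$. The derivatives satisfy $\dot u_n\rightharpoonup\dot u^*$ weakly in $\L^p$, directly from weak convergence in $\W^{1,p}$. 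Finally, writing the Caputo derivative as $\Dl[u]=\IlC[\dot u]$ and using that $\IlC$ is a bounded linear, hence weak-to-weak continuous, operator on $\L^p$, I obtain $\Dl[u_n]\rightharpoonup\Dl[u^*]$ weakly in $\L^p$.

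The heart of the argument is weak lower semicontinuity, which I would derive from convexity. Writing $\xi_n:=(u_n,\Il[u_n],\dot u_n,\Dl[u_n])$ and $\xi:=(u^*,\Il[u^*],\dot u^*,\Dl[u^*])$, with $\xi_n^i,\xi^i$ their $i$th components, the $\CC^1$ convexity of $L(\cdot,t)$ yields the gradient inequality, pointwise in $t$,
\begin{equation*}
L(\xi_n,t)\ \geq\ L(\xi,t)+\sum_{i=1}^{4}\partial_i L(\xi,t)\cdot(\xi_n^i-\xi^i).
\end{equation*}
Integrating over $[a,b]$, where regularity guarantees every integral is finite, gives
\begin{equation*}
\LL(u_n)\ \geq\ \LL(u^*)+\sum_{i=1}^{4}\int_a^b \partial_i L(\xi,t)\cdot(\xi_n^i-\xi^i)\,dt.
\end{equation*}
It then suffices to show each of the four correction integrals tends to $0$, and the regularity hypotheses are tailored precisely for this: for $i=1$, $\partial_1 L(\xi,\cdot)\in\L^1$ is paired with $u_n-u^*\to 0$ in $\L^\infty$; for $i=2$, $\partial_2 L(\xi,\cdot)\in\L^{p'}$ is paired with the strong $\L^p$ convergence $\Il[u_n]-\Il[u^*]\to 0$, so Hölder's inequality closes the estimate; and for $i=3,4$, $\partial_i L(\xi,\cdot)\in\L^{p'}=(\L^p)^*$ is paired with the weak $\L^p$ convergences $\dot u_n-\dot u^*\rightharpoonup 0$ and $\Dl[u_n]-\Dl[u^*]\rightharpoonup 0$, so these integrals vanish by the very definition of weak convergence. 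Taking $\liminf_n$ yields $\liminf_n\LL(u_n)\ge\LL(u^*)$, the desired lower semicontinuity.

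The main obstacle I anticipate is the pair of terms $i=3$ and $i=4$, where only weak (not strong) convergence of the highest-order arguments is available: here strong convergence genuinely fails, and it is exactly the convexity of $L$ that converts weak convergence into the usable one-sided bound above. The remaining work is careful bookkeeping—matching each partial derivative's integrability class ($\L^1$ or $\L^{p'}$) to the correct convergence mode so that either Hölder's inequality or the duality $(\L^p)^*=\L^{p'}$ applies, and verifying throughout that regularity keeps $\LL$ finite on all functions involved.
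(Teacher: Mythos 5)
Your proposal is correct and follows essentially the same route as the paper: direct method with a minimizing sequence, coercivity plus reflexivity of $\W^{1,p}$ to extract a weak limit in the weakly closed set $\E$, and the convexity (gradient) inequality combined with the regularity hypotheses to pass to the limit term by term, pairing $\L^1$ with $\L^\infty$-convergence, $\L^{p'}$ with strong $\L^p$-convergence of $\Il[u_n]$, and $\L^{p'}$ with weak $\L^p$-convergence of $\dot u_n$ and $\Dl[u_n]$. Your added justification that $\Dl[u_n]\rightharpoonup\Dl[u^*]$ via weak-to-weak continuity of the bounded operator $\IlC$ is a detail the paper leaves implicit, but the argument is the same.
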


\begin{proof}
Because the Lagrangian $L$ is regular,
$L(u,\Il[u],\dot{u},\Dl[u],t) \in \L^1$
and $\LL (u)$ exists in $\R$.
Let $(u_n)_{n \in \N} \subset \E$
be a minimizing sequence satisfying
\begin{equation}
\label{eq:1}
\LL (u_n) \longrightarrow \inf\limits_{u \in \E} \LL (u) < +\infty.
\end{equation}
Coercivity of $\LL$ implies boundedness of $(u_n)_{n \in \N}$ in $\W^{1,p}$.
Moreover, since $\W^{1,p}$ is a reflexive Banach space, there exists $\bar{u}$
and a subsequence of $(u_n)_{n \in \N}$,
that we still denote as  $(u_n)_{n \in \N}$, such that
$u_n  \xrightharpoonup[]{\W^{1,p}}\bar{u}$.
Furthermore, since $\E$ is a weakly closed subset
of $\W^{1,p}$, $\bar{u} \in \E$.
On the other hand, from the convexity of $L$, we have
\begin{multline}
\label{eq00}
\LL (u_n) \geq \LL (\bar{u}) + \di \int_a^b \partial_1 L \cdot (u_n-\bar{u})
+ \partial_2 L \cdot (\Il[u_n]-\Il[\bar{u}]) \\
+ \partial_3 L \cdot (\dot{u}_n-\dot{\bar{u}})
+ \partial_4 L \cdot (\Dl[u_n]-\Dl[\bar{u}])  \; dt
\end{multline}
for any $n \in \N$, where $\partial_i L$ is taken in
$(\bar{u},\Il[\bar{u}],\dot{\bar{u}},\Dl[\bar{u}],t)$,
$i=1,2,3,4$.
Now, because $L$ is regular, $(u_n)_{n \in \N}$ is weakly convergent to
$\bar{u}$ in $\W^{1,p}$, $\Il$ is linear bounded from $\L^p$ to $\L^p$
and, since the compact embedding $\W^{1,p} \hooktwoheadrightarrow \CC$ holds,
one concludes that
\begin{itemize}
\item $\partial_3 L(\bar{u},\Il[\bar{u}],\dot{\bar{u}},\Dl[\bar{u}],t)
\in \L^{p'}$ and $\dot{u_n} \xrightharpoonup[]{\L^p} \dot{\bar{u}}$;
\item $\partial_4 L(\bar{u},\Il[\bar{u}],\dot{\bar{u}},\Dl[\bar{u}],t)
\in \L^{p'}$ and $\Dl[u_n] \xrightharpoonup[]{\L^p} \Dl[\bar{u}]$;
\item $\partial_1 L(\bar{u},\Il[\bar{u}],\dot{\bar{u}},\Dl[\bar{u}],t)
\in \L^1$ and $u_n \xrightarrow[]{\L^\infty} \bar{u}$;
\item $\partial_2 L(\bar{u},\Il[\bar{u}],\dot{\bar{u}},\Dl[\bar{u}],t)
\in \L^{p'}$ and $\Il[u_n] \xrightarrow[]{\L^p} \Il[\bar{u}]$.
\end{itemize}
Finally, returning to \eqref{eq:1}
and taking $n\rightarrow\infty$ in inequality \eqref{eq00}, we obtain that
\begin{equation*}
\inf\limits_{u \in \E} \LL (u) \geq \LL (\bar{u}) \in \R,
\end{equation*}
which completes the proof.
\end{proof}

In order to make the hypotheses of our Theorem~\ref{thmtonelli} more concrete,
in Sections~\ref{ssec:regular} and \ref{ssec:coercive}
we prove more precise sufficient conditions on the Lagrangian $L$,
that imply regularity and coercivity of functional $\LL$.
For this purpose we define a family of sets $\PP_M$ for any $M\geq 1$.


\subsection{Sufficient Condition for a Lagrangian $\mathbf{L}$ to be Regular}
\label{ssec:regular}

For $M\geq 1$, we define $\PP_M$ to be the set of maps
$P:(\R^d)^4\times [a,b]\rightarrow \R^{+}$ such that
for any $(x_1,x_2,x_3,x_4,t) \in (\R^d)^4 \times [a,b]$
\begin{equation*}
P(x_1,x_2,x_3,x_4,t) = \di \sum_{k=0}^{N} c_k(x_1,t)
\Vert x_2 \Vert^{d_{2,k}} \Vert x_3 \Vert^{d_{3,k}} \Vert x_4 \Vert^{d_{4,k}}
\end{equation*}
with $ N \in \N$ and where, for any $k=0,\ldots,N$,
$\fonctionsansdef{c_k}{\R^d \times [a,b]}{\R^+}$
is continuous and satisfies $d_{2,k}+ d_{3,k} + d_{4,k} \leq p/M$.

The following lemma holds for the family of maps $\PP_M$.

\begin{lemma}
\label{lemregP}
Let $M \geq 1$ and $P \in \PP_M$. Then, for any $u \in \W^{1,p}$, we have
\begin{equation*}
\; P(u,\Il[u],\dot{u},\Dl[u],t) \in \L^{M}.
\end{equation*}
\end{lemma}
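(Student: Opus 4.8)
The plan is to reduce the statement to a single monomial and then control everything with a generalized H\"older inequality whose exponents are dictated precisely by the constraint $d_{2,k}+d_{3,k}+d_{4,k}\le p/M$. Since $M\ge 1$, the space $\L^M$ is normed and Minkowski's inequality applies; as $P$ is a finite sum of $N+1$ terms, it suffices to show that each summand
\[
T_k := c_k(u,t)\,\Vert \Il[u]\Vert^{d_{2,k}}\,\Vert \dot{u}\Vert^{d_{3,k}}\,\Vert \Dl[u]\Vert^{d_{4,k}}
\]
belongs to $\L^M$. I fix such a $k$ and drop the index, writing $d_2,d_3,d_4$ for the three exponents.

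First I would check that the three vector inputs lie in $\L^p$. By the compact embedding $\W^{1,p}\hooktwoheadrightarrow\CC$ we have $u\in\CC\subset\L^\infty\subset\L^p$, so the Proposition of Section~\ref{sec:basicnotions} gives $\Il[u]\in\L^p$; also $\dot{u}\in\L^p$ by definition of $\W^{1,p}$; and since $\Dl[u]=\IlC[\dot{u}]$ is a Riemann--Liouville integral of order $1-\alpha>0$ applied to $\dot{u}\in\L^p$, the same Proposition yields $\Dl[u]\in\L^p$. Moreover $t\mapsto c_k(u(t),t)$ is a composition of continuous maps on the compact set $[a,b]$, hence bounded by some constant $C\ge 0$.

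Now set the scalar functions $g_2:=\Vert\Il[u]\Vert$, $g_3:=\Vert\dot{u}\Vert$ and $g_4:=\Vert\Dl[u]\Vert$, each in $\L^p$. Discarding any factor with $d_i=0$ and putting $r_i:=p/(Md_i)$ for the remaining indices, one has $r_i\ge 1$ because $d_i\le d_2+d_3+d_4\le p/M$, and $g_i^{Md_i}\in\L^{r_i}$ with $\Vert g_i^{Md_i}\Vert_{\L^{r_i}}=\Vert g_i\Vert_{\L^p}^{Md_i}<\infty$. The decisive computation is
\[
\sum_i\frac{1}{r_i}=\frac{M(d_2+d_3+d_4)}{p}\le 1,
\]
which is exactly the defining inequality of $\PP_M$. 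Hence the generalized H\"older inequality gives $\prod_i g_i^{Md_i}\in\L^\rho$, where $1/\rho=\sum_i 1/r_i\le 1$, and since $(a,b)$ has finite measure $\L^\rho\subset\L^1$. Consequently
\[
\int_a^b |T_k|^M\,dt\le C^M\int_a^b\prod_i g_i^{Md_i}\,dt<\infty,
\]
so $T_k\in\L^M$, and summing over $k$ completes the argument.

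The only real obstacle is the exponent bookkeeping: choosing $r_i=p/(Md_i)$ and verifying $\sum_i 1/r_i\le 1$ is the one place where the precise threshold $p/M$ in the definition of $\PP_M$ is used, and the passage from ``$\le 1$'' to an honest H\"older exponent relies on the finite measure of $(a,b)$. Everything else---the reduction to a single monomial, the membership of the three inputs in $\L^p$, and the boundedness of $c_k(u,\cdot)$---is routine once the boundedness of the Riemann--Liouville integrals and the compact embedding $\W^{1,p}\hooktwoheadrightarrow\CC$ are granted.
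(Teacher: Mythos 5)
Your argument is correct and is essentially the paper's own proof: both reduce to a single monomial, place $\Il[u]$, $\dot{u}$, $\Dl[u]$ in $\L^p$, bound $c_k(u,\cdot)$ by continuity, and apply the generalized H\"older inequality using the constraint $d_{2,k}+d_{3,k}+d_{4,k}\le p/M$. The only cosmetic difference is that you raise everything to the power $M$ and land in $\L^1$, whereas the paper keeps the factors in $\L^{p/d_{i,k}}$ and lands directly in $\L^{r}$ with $r=p/(d_{2,k}+d_{3,k}+d_{4,k})\ge M$, then uses the finite measure of $(a,b)$; your version makes the $d_{i,k}=0$ bookkeeping and the role of the finite measure slightly more explicit.
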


\begin{proof}
Because $c_k(u,t)$ is continuous for any $k=0,\ldots,N$, it is in $\L^\infty$.
We also have $\Vert \Il[u] \Vert^{d_{2,k}} \in \L^{p/d_{2,k}}$,
$\Vert \dot{u} \Vert^{d_{3,k}} \in \L^{p/d_{3,k}}$
and $\Vert \Dl[u] \Vert^{d_{4,k}} \in \L^{p/d_{4,k}}$. Consequently,
\begin{equation*}
c_k(u,t) \Vert \Il[u] \Vert^{d_{2,k}}
\Vert \dot{u} \Vert^{d_{3,k}} \Vert  \Dl[u] \Vert^{d_{4,k}} \in \L^r
\end{equation*}
with $r=p/(d_{2,k} + d_{3,k} +d_{4,k}) \geq M$. The proof is complete.
\end{proof}

With the help of Lemma~\ref{lemregP},
it is easy to prove the following
sufficient condition on the Lagrangian $L$,
which implies its regularity.

\begin{proposition}
\label{prop1}
If there exists $P_0 \in \PP_{1}$, $P_1 \in \PP_{1}$,
$P_2 \in \PP_{p'}$, $P_3 \in \PP_{p'}$ and $P_4 \in \PP_{p'}$
such that
\begin{itemize}
\item $ \vert L(x_1,x_2,x_3,x_4,t) \vert \leq P_0(x_1,x_2,x_3,x_4,t) $;
\item $ \Vert \partial_1 L(x_1,x_2,x_3,x_4,t) \Vert \leq P_1(x_1,x_2,x_3,x_4,t) $;
\item $ \Vert \partial_2 L(x_1,x_2,x_3,x_4,t) \Vert \leq P_2(x_1,x_2,x_3,x_4,t) $;
\item $ \Vert \partial_3 L(x_1,x_2,x_3,x_4,t) \Vert \leq P_3(x_1,x_2,x_3,x_4,t) $;
\item $ \Vert \partial_4 L(x_1,x_2,x_3,x_4,t) \Vert \leq P_4(x_1,x_2,x_3,x_4,t) $;
\end{itemize}
for any $(x_1,x_2,x_3,x_4,t) \in (\R^d)^4 \times [a,b]$, then $L$ is regular.
\end{proposition}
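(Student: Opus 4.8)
The plan is to read off the five integrability conditions in the definition of regularity as immediate consequences of the hypothesized pointwise growth bounds together with Lemma~\ref{lemregP}, which already contains the entire analytic substance. First I would fix an arbitrary $u \in \W^{1,p}$ and abbreviate the relevant composed functions by setting $\ell(t) := L(u,\Il[u],\dot{u},\Dl[u],t)$ and, for $i=1,2,3,4$, $\ell_i(t) := \partial_i L(u,\Il[u],\dot{u},\Dl[u],t)$, as well as $p_i(t) := P_i(u,\Il[u],\dot{u},\Dl[u],t)$ for $i=0,\ldots,4$. Because $L$ is of class $\CC^1$, both $L$ and each $\partial_i L$ are continuous on $(\R^d)^4 \times [a,b]$, and since $t \mapsto (u(t),\Il[u](t),\dot{u}(t),\Dl[u](t))$ is measurable on $(a,b)$, each of $\ell, \ell_1,\ldots,\ell_4$ is measurable.

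Next I would invoke Lemma~\ref{lemregP} to place each dominating function in the correct Lebesgue space. Since $P_0,P_1 \in \PP_1$, the lemma applied with $M=1$ gives $p_0,p_1 \in \L^1$; since $P_2,P_3,P_4 \in \PP_{p'}$, the lemma applied with $M=p'$ gives $p_2,p_3,p_4 \in \L^{p'}$. The five hypotheses of the proposition then read, pointwise almost everywhere, as $\vert \ell \vert \leq p_0$, $\Vert \ell_1 \Vert \leq p_1$, and $\Vert \ell_i \Vert \leq p_i$ for $i=2,3,4$.

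Finally I would conclude by domination: for a measurable $\R^d$-valued function whose Euclidean norm is bounded above almost everywhere by an element of $\L^M$, monotonicity of the $\L^M$-norm under pointwise domination forces the function itself to lie in $\L^M$. Applying this in each case yields $\ell \in \L^1$, $\ell_1 \in \L^1$, and $\ell_2,\ell_3,\ell_4 \in \L^{p'}$, which are precisely the five conditions defining regularity; since $u \in \W^{1,p}$ was arbitrary, $L$ is regular. I do not expect a genuine obstacle here. The only points deserving a word of justification are the measurability of the compositions, which follows from continuity of $L$ and its partial derivatives, and the elementary domination step, while all the nontrivial growth bookkeeping has already been discharged in Lemma~\ref{lemregP}.
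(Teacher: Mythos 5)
Your argument is correct and is exactly the one the paper intends: the paper gives no explicit proof, saying only that the result follows easily ``with the help of Lemma~\ref{lemregP}'', and your write-up supplies precisely those details --- applying the lemma with $M=1$ to $P_0,P_1$ and with $M=p'$ to $P_2,P_3,P_4$, then concluding by pointwise domination. Nothing further is needed.
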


The coercivity assumption in Theorem~\ref{thmtonelli} is strongly dependent
on the set $\E$. In Section~\ref{ssec:coercive} we provide an example of such set.
Moreover, with such choice for $\E$, we give a sufficient condition
on the Lagrangian $\L$ implying coercivity of $\LL$.


\subsection{Sufficient Condition for a Functional $\mathbf{\LL}$ to be Coercive}
\label{ssec:coercive}

Consider $u_0 \in \R^d$ and $\E = \W^{1,p}_a$, where
$\W^{1,p}_a := \{ u \in \W^{1,p}, \; u(a)=u_0 \}$. We note that
$\W^{1,p}_a$ is a weakly closed subset of $\W^{1,p}$
because of the compact embedding $\W^{1,p} \hooktwoheadrightarrow \CC$.

The following lemma is important in the proof of Proposition~\ref{prop1b}.

\begin{lemma}
\label{lemdom}
There exist $A_0$, $A_1 \geq 0$ such that
\begin{itemize}
\item $\Vert u \Vert_{\L^\infty} \leq A_0 \Vert \dot{u} \Vert_{\L^p} + A_1$;
\item $\Vert \Il[u] \Vert_{\L^p} \leq A_0 \Vert \dot{u} \Vert_{\L^p} + A_1$;
\item $\Vert \Dl[u] \Vert_{\L^p} \leq A_0 \Vert \dot{u} \Vert_{\L^p} + A_1$;
\end{itemize}
for any $u \in \W^{1,p}_a$.
\end{lemma}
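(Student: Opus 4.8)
The plan is to establish the three estimates separately, each reducing to an elementary inequality combined with the Riemann--Liouville boundedness Proposition and the fact that on the bounded interval $[a,b]$ one has the continuous inclusion $\L^\infty \subset \L^p$ with $\Vert v \Vert_{\L^p} \leq (b-a)^{1/p} \Vert v \Vert_{\L^\infty}$. Once bounds of the form $c_i \Vert \dot{u} \Vert_{\L^p} + e_i$ are obtained for $i=1,2,3$, I would simply set $A_0 := \max_i c_i$ and $A_1 := \max_i e_i$, so that all three inequalities hold simultaneously for these common constants.

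For the first estimate I would exploit that every $u \in \W^{1,p}_a$ is absolutely continuous with $u(a) = u_0$, so that $u(t) = u_0 + \int_a^t \dot{u}(s)\,ds$ for all $t \in [a,b]$. Taking norms and applying Hölder's inequality gives $\Vert u(t) \Vert \leq \Vert u_0 \Vert + (b-a)^{1/p'} \Vert \dot{u} \Vert_{\L^p}$ uniformly in $t$, whence $\Vert u \Vert_{\L^\infty} \leq (b-a)^{1/p'} \Vert \dot{u} \Vert_{\L^p} + \Vert u_0 \Vert$. This settles the first bound, with additive constant $\Vert u_0 \Vert$.

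The second estimate follows by chaining: the Proposition bounds $\Il$ on $\L^p$, so $\Vert \Il[u] \Vert_{\L^p} \leq \frac{(b-a)^\alpha}{\Gamma(1+\alpha)} \Vert u \Vert_{\L^p}$; then $\Vert u \Vert_{\L^p} \leq (b-a)^{1/p} \Vert u \Vert_{\L^\infty}$ together with the first estimate converts this into a bound of the required form. For the third estimate the key observation is that $\Dl[u] = \IlC[\dot{u}]$ by definition, i.e. the Caputo derivative is the left Riemann--Liouville integral of order $1-\alpha > 0$ applied to $\dot{u} \in \L^p$. Applying the Proposition with $\alpha$ replaced by $1-\alpha$ then yields directly $\Vert \Dl[u] \Vert_{\L^p} = \Vert \IlC[\dot{u}] \Vert_{\L^p} \leq \frac{(b-a)^{1-\alpha}}{\Gamma(2-\alpha)} \Vert \dot{u} \Vert_{\L^p}$, with vanishing additive constant.

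There is no serious obstacle here; the arguments are routine once the right tools are lined up. The only point worth care is the third estimate, where recognizing $\Dl[u]$ as a Riemann--Liouville integral of $\dot{u}$ is exactly what allows the Proposition to apply, and is precisely where the choice of working in $\W^{1,p}$, forcing $\dot{u} \in \L^p$, is used.
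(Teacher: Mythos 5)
Your proposal is correct and follows essentially the same route as the paper: the pointwise bound $u(t)=u_0+\int_a^t\dot{u}$ with H\"older for the $\L^\infty$ estimate, the boundedness of $\Il$ on $\L^p$ chained with a bound on $\Vert u\Vert_{\L^p}$ for the second, the identification $\Dl[u]=\IlC[\dot{u}]$ with the Riemann--Liouville boundedness proposition (at order $1-\alpha$) for the third, and a final maximum over the constants. The only cosmetic difference is that the paper bounds $\Vert u\Vert_{\L^p}$ directly via $\Vert u-u_0\Vert_{\L^p}\leq (b-a)\Vert\dot{u}\Vert_{\L^p}$ rather than passing through the $\L^\infty$ estimate, which changes nothing of substance.
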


\begin{proof}
It is easy to see that boundedness of the left Riemann--Liouville fractional integral $\Il$
implies the last inequality. In the case of the second inequality,
we have $\Vert u \Vert_{\L^p} \leq \Vert u-u_0 \Vert_{\L^p}
+ \Vert u_0 \Vert_{\L^p} \leq (b-a) \Vert \dot{u} \Vert_{\L^p} + (b-a)^{1/p} \Vert u_0 \Vert$
for any $u \in \W^{1,p}_a$. Therefore, using again the boundedness of $\Il$,
we arrive to the desired conclusion. Finally,
let us consider the first inequality. We have
$\Vert u \Vert_{\L^\infty} \leq \Vert u-u_0 \Vert_{\L^\infty}
+ \Vert u_0 \Vert \leq \Vert \dot{u} \Vert_{\L^1}
+ \Vert u_0 \Vert \leq (b-a)^{1/p'} \Vert \dot{u} \Vert_{\L^p} + \Vert u_0 \Vert$
for any $u \in \W^{1,p}_a$. The proof is completed by defining $A_0$ and $A_1$
as the maximum of the appearing constants.
\end{proof}

Next proposition gives a sufficient condition for the coercivity of $\LL$.

\begin{proposition}
\label{prop1b}
Assume that
\begin{equation*}
L(x_1,x_2,x_3,x_4,t) \geq c_0 \Vert x_3 \Vert^{p}
+ \di \sum_{k=1}^{N} c_k \Vert x_1 \Vert^{d_{1,k}}
\Vert x_2 \Vert^{d_{2,k}} \Vert x_3 \Vert^{d_{3,k}} \Vert x_4 \Vert^{d_{4,k}}
\end{equation*}
for any $(x_1,x_2,x_3,x_4,t) \in (\R^d)^4 \times [a,b]$,
where $c_0 > 0$, $c_k \in \R$, $N \in \N^*$, and
\begin{equation}
\label{eq:assump}
0 \leq d_{1,k}+d_{2,k}+d_{3,k}+d_{4,k} < p
\end{equation}
for any $k=1,\ldots,N$.
Then, $\LL$ is coercive on $\W^{1,p}_a$.
\end{proposition}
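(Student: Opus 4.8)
The plan is to reduce everything to the single scalar quantity $X := \Vert \dot{u} \Vert_{\L^p}$ and to show that $\LL(u)$ grows at least like $c_0 X^p$ as $X \to \infty$. First I would observe that on $\W^{1,p}_a$ the full Sobolev norm is controlled by $X$: the estimate $\Vert u \Vert_{\L^p} \leq (b-a) \Vert \dot{u} \Vert_{\L^p} + (b-a)^{1/p} \Vert u_0 \Vert$ established inside the proof of Lemma~\ref{lemdom} gives $\Vert u \Vert_{\W^{1,p}} \leq C (X + 1)$ for some constant $C$, so $\Vert u \Vert_{\W^{1,p}} \to \infty$ forces $X \to \infty$. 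Hence it suffices to prove that $\LL(u) \to +\infty$ as $X \to \infty$ with $u \in \W^{1,p}_a$.

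Next I would integrate the assumed pointwise lower bound on $L$ over $[a,b]$, which yields
\[
\LL(u) \geq c_0 X^p + \sum_{k=1}^N c_k I_k ,
\]
where $I_k := \int_a^b \Vert u \Vert^{d_{1,k}} \Vert \Il[u] \Vert^{d_{2,k}} \Vert \dot{u} \Vert^{d_{3,k}} \Vert \Dl[u] \Vert^{d_{4,k}} \, dt$. Because the coefficients $c_k$ may be negative, the key point is to bound each $I_k$ from above by a sublinear power of $X$. I would first extract the factor $\Vert u \Vert^{d_{1,k}} \leq \Vert u \Vert_{\L^\infty}^{d_{1,k}}$ pointwise, then apply the generalized H\"older inequality to the remaining product $\Vert \Il[u] \Vert^{d_{2,k}} \Vert \dot{u} \Vert^{d_{3,k}} \Vert \Dl[u] \Vert^{d_{4,k}}$ with exponents $p/d_{2,k}$, $p/d_{3,k}$, $p/d_{4,k}$, together with one further factor equal to the constant $1$ that absorbs the positive slack $1 - (d_{2,k}+d_{3,k}+d_{4,k})/p > 0$ coming from the strict inequality \eqref{eq:assump}. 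This produces
\[
I_k \leq (b-a)^{1 - (d_{2,k}+d_{3,k}+d_{4,k})/p} \, \Vert u \Vert_{\L^\infty}^{d_{1,k}} \, \Vert \Il[u] \Vert_{\L^p}^{d_{2,k}} \, \Vert \dot{u} \Vert_{\L^p}^{d_{3,k}} \, \Vert \Dl[u] \Vert_{\L^p}^{d_{4,k}} .
\]

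Finally I would insert the three estimates of Lemma~\ref{lemdom}, namely that $\Vert u \Vert_{\L^\infty}$, $\Vert \Il[u] \Vert_{\L^p}$ and $\Vert \Dl[u] \Vert_{\L^p}$ are each bounded by $A_0 X + A_1$, together with $\Vert \dot{u} \Vert_{\L^p} = X$. For $X \geq 1$ one has $A_0 X + A_1 \leq (A_0+A_1) X$, so each $I_k$ is bounded by $B_k X^{s_k}$ with $s_k := d_{1,k}+d_{2,k}+d_{3,k}+d_{4,k} < p$ and $B_k$ a constant depending only on $a,b,A_0,A_1$ and the exponents. Plugging back gives $\LL(u) \geq c_0 X^p - \sum_{k=1}^N |c_k| B_k X^{s_k}$, and since $c_0 > 0$ while every exponent $s_k$ is strictly smaller than $p$, the leading term dominates and $\LL(u) \to +\infty$. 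The main obstacle I expect is the careful bookkeeping in the H\"older step --- in particular handling the slack arising from the strict inequality $s_k < p$ and the degenerate cases where some $d_{i,k}$ vanish, so that the corresponding H\"older exponent is $\infty$ and that factor simply drops out --- rather than any conceptual difficulty.
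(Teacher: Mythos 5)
Your proposal is correct and follows essentially the same route as the paper: pull out $\Vert u\Vert_{\L^\infty}^{d_{1,k}}$, apply H\"older with exponents $p/d_{2,k}$, $p/d_{3,k}$, $p/d_{4,k}$ (your constant-$1$ factor with the slack exponent is exactly the paper's $(b-a)^{1/r'}$ with $r=p/(d_{2,k}+d_{3,k}+d_{4,k})$), then invoke Lemma~\ref{lemdom} to reduce everything to powers of $\Vert\dot{u}\Vert_{\L^p}$ strictly below $p$, and finally use the equivalence of $\Vert\dot{u}\Vert_{\L^p}\to\infty$ and $\Vert u\Vert_{\W^{1,p}}\to\infty$ on $\W^{1,p}_a$. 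You in fact spell out the bookkeeping (the degenerate exponents and the final comparison of powers) more explicitly than the paper does.
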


\begin{proof}
First, let us define $r = p/(d_{2,k}+d_{4,k}+ d_{3,k}) \geq 1$.
Applying H\"older's inequality, one can easily prove that
\begin{equation*}
\LL (u) \geq c_0 \Vert \dot{u} \Vert_{\L^p}^p - (b-a)^{1/r'} \di
\sum_{k=1}^N \vert c_k \vert \Vert u \Vert^{d_{1,k}}_{\L^\infty}
\Vert \Il[u] \Vert^{d_{2,k}}_{\L^p}  \Vert \dot{u} \Vert^{d_{3,k}}_{\L^p}
\Vert \Dl[u] \Vert^{d_{4,k}}_{\L^p}
\end{equation*}
for any $u \in \W^{1,p}_a$.
Moreover, from Lemma~\ref{lemdom} and \eqref{eq:assump}, we obtain that
\begin{equation*}
\lim\limits_{\substack{\Vert \dot{u} \Vert_{\L^{p}} \to \infty\\
u \in \W^{1,p}_a }} \LL (u) = +\infty.
\end{equation*}
Finally, applying again Lemma~\ref{lemdom}, we have that
\begin{equation*}
\Vert \dot{u} \Vert_{\L^{p}} \to \infty
\Longleftrightarrow \Vert u \Vert_{\W^{1,p}} \to \infty
\end{equation*}
in $\W^{1,p}_a$. Therefore, $\LL$ is coercive on $\W^{1,p}_a$.
The proof is complete.
\end{proof}

In the next section we illustrate our results through an example.


\section{An Illustrative Example}
\label{ex:example}

Consider the following fractional problem of the calculus of variations:
\begin{equation}
\label{eq:ex}
\begin{gathered}
\LL(u) = \int_a^b \Vert u \Vert^2 + \Vert \Il[u] \Vert^2
+ \Vert \dot{u} \Vert^2 + \Vert \Dl[u] \Vert^2  \; dt \longrightarrow \min_{u \in \W^{1,2}}\\
u(a) = u_0.
\end{gathered}
\end{equation}
It is not difficult to verify that the Lagrangian $L$ for this problem
is convex and satisfies the hypotheses of Propositions~\ref{prop1} and~\ref{prop1b} with $p=2$.
Therefore, it follows from Theorem~\ref{thmtonelli}
that there exists a solution for problem \eqref{eq:ex}.
Such minimizer can be determined using the optimality conditions proved
in \cite{MR2861317,MyID:208} and approximated by the numerical methods
developed in \cite{MyID:225,MyID:235}.


\section*{Acknowledgements}

The authors are grateful to the support and the good working conditions
of the {\it Center for Research and Development
in Mathematics and Applications}, University of Aveiro.
They would also like to thank an anonymous referee for his/her
careful reading of the manuscript and for suggesting useful changes.



\end{document}